\numberwithin{equation}{section}
\newtheorem{thm}{Theorem}[section]
\newtheorem{lem}[thm]{Lemma}
\theoremstyle{remark}
\newtheorem{rmk}[thm]{Remark}
\newtheorem{ex}[thm]{Example}
\newcommand{\cO}{{\mathcal O}}
\newcommand{\PP}{{\mathbb P}}
\newcommand{\QQ}{{\mathbb Q}}
\newcommand{\ZZ}{{\mathbb Z}}
\newcommand{{\D}}{{\mathscr{D}_{X}}} 
\newcommand{\eE}{{\mathscr{E}}}
\newcommand{\Ox}{{\mathscr{O}_{X}}}
\title{Rank one summands of Frobenius pushforwards of line bundles on G/P} 
\author{Feliks Rączka}
\address{Institute of Mathematics, Polish Academy of Sciences, ul.\ Śniadeckich 8,
    \newline\indent 00-656 Warsaw, Poland
  }
\email{fraczka@impan.pl}
\begin{document}

\begin{abstract}
Let $X=G/P$ be a partial flag variety, where $G$ is a semi-simple, simply connected algebraic group defined over an algebraically closed field $K$ of positive characteristic. Let $\mathsf{F}\colon X\to X$ be the absolute Frobenius morphism. Given a line bundle $\mathscr{L}$ on $X$ and an integer $r\geq1$, we describe all line bundles that are direct summands of the pushforward $\mathsf{F}_{*}^{r}\mathscr{L}$. For $\mathscr{L}$ corresponding to a dominant weight, we also compute, for $r$ sufficiently large, the multiplicity of $\Ox$ as a summand of $\mathsf{F}_{*}^{r}\mathscr{L}$. As an application we answer a question of Gros--Kaneda.
\end{abstract}

\maketitle

\section{Introduction}\label{Introduction}

Let $K$ be an algebraically closed field of characteristic $p>0$, let $G$ be a semi-simple, simply connected algebraic group over $K$, and let $P\subset G$ be a parabolic subgroup. Let $\mathsf{F}\colon G/P\to G/P$ be the absolute Frobenius morphism and denote by $\mathsf{F}^{r}$ the composition of $r$ absolute Frobenii. As we explain in greater detail in Section \ref{Section: Motivation}, it is an important and difficult problem to determine, for a given $\mathscr{L}\in\textnormal{Pic}(G/P)$, the decomposition of $\mathsf{F}^{r}_{*}\mathscr{L}$ into a direct sum of indecomposable vector bundles (following \cite[Section 4, Definition]{Atiyah} we call such a decomposition the \textit{Remak decomposition}). The goal of this paper is to provide two results about the line bundles that appear in this decomposition. First, the line bundles on $G/P$ are parameterized by the characters of $P$, and in Theorem \ref{SummandOfLThm} we classify the line bundles that are direct summands of $\mathsf{F}^{r}_{*}\mathscr{L}$ in terms of this identification (for every $\mathscr{L}\in\textnormal{Pic}(G/P)$). Second, we try to address the problem of computing the multiplicities of these line bundles as summands of $\mathsf{F}^{r}_{*}\mathscr{L}$. This task seems to be quite difficult and we do not have a complete solution, but in Theorem \ref{MultiplicityofOxThm} we show how to compute the multiplicity of the structure sheaf if $\mathscr{L}$ corresponds to a dominant weight that (in a suitable sense) is smaller than $p^{r}\rho$, where $\rho$ is half the sum of the positive roots. In particular, this result shows that if $\mathscr{L}$ corresponds to a dominant weight, then $H^{0}(G/P, \mathscr{L})\otimes\mathscr{O}_{G/P}$ is a direct summand of $\mathsf{F}^{r}_{*}\mathscr{L}$ for all $r\gg0$.

\medskip
We now formalize the discussion from the previous paragraph. We fix a maximal torus $T\subset G$ and a Borel subgroup $B\subset G$ such that $T\subset B\subset P$. We also fix the set $S$ of simple roots. We let $X(T)$ (resp. $X^{\vee}(T)$) be the group of characters (resp. co-characters) of $T$ and we write $\langle-,-\rangle:X(T)\times X^{\vee}(T)\to\ZZ$ for the canonical perfect pairing. We write $X(P)$ for the character group of $P$, which parametrizes $G$-equivariant line bundles on $G/P$. For $\mu\in X(P)$, we denote by $\mathscr{L}^{P}(\mu)$ the corresponding line bundle. We write $\rho_{P}$ for the unique element of $\QQ\otimes_{\ZZ}X(P)$ such that $\omega_{G/P}=\mathscr{L}^{P}(-2\rho_{P})$. First, we prove the following theorem.

\begin{thm}\label{SummandOfLThm}
The following conditions are equivalent for $\mu,\lambda\in X(P)$.
\begin{enumerate}
    \item $\mathscr{L}^{P}(\lambda)$ is a direct summand of $\mathsf{F}^{r}_{*}\mathscr{L}^{P}(\mu)$.

    \item The inequality $0\leq \langle \mu-p^{r}\lambda,\alpha^{\vee}\rangle\leq (p^{r}-1)\langle 2\rho_{P},\alpha^{\vee
    }\rangle$ holds for all $\alpha\in S$.
\end{enumerate}
\end{thm}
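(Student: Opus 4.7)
My plan is to reduce to the case $\lambda=0$ by tensoring with $\mathscr{L}^{P}(-\lambda)$: by the projection formula,
$$\mathsf{F}^{r}_{*}\mathscr{L}^{P}(\mu)\otimes\mathscr{L}^{P}(-\lambda)=\mathsf{F}^{r}_{*}\mathscr{L}^{P}(\mu-p^{r}\lambda),$$
so setting $\eta:=\mu-p^{r}\lambda$ the theorem reduces to: $\mathscr{O}_{X}$ is a direct summand of $\mathsf{F}^{r}_{*}\mathscr{L}^{P}(\eta)$ if and only if $0\le\langle\eta,\alpha^{\vee}\rangle\le(p^{r}-1)\langle 2\rho_{P},\alpha^{\vee}\rangle$ for every $\alpha\in S$. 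Tautologically $\Hom(\mathscr{O}_{X},\mathsf{F}^{r}_{*}\mathscr{L}^{P}(\eta))=H^{0}(X,\mathscr{L}^{P}(\eta))$, and Grothendieck--Serre duality for the finite morphism $\mathsf{F}^{r}$, using $\omega_{\mathsf{F}^{r}}=\omega_{X}^{1-p^{r}}=\mathscr{L}^{P}((p^{r}-1)\cdot 2\rho_{P})$, gives
$$\Hom(\mathsf{F}^{r}_{*}\mathscr{L}^{P}(\eta),\mathscr{O}_{X})=H^{0}(X,\mathscr{L}^{P}((p^{r}-1)\cdot 2\rho_{P}-\eta)).$$
By Borel--Weil on $G/P$, each of these is nonzero exactly when the corresponding weight is dominant, so both are nonzero precisely under condition (2). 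The direction $(1)\Rightarrow(2)$ is now immediate: a direct summand provides nonzero maps in both directions.

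For the converse $(2)\Rightarrow(1)$, since $\mathscr{O}_{X}$ has only scalar endomorphisms, it suffices to exhibit $f\colon\mathscr{O}_{X}\to\mathsf{F}^{r}_{*}\mathscr{L}^{P}(\eta)$ and $g\colon\mathsf{F}^{r}_{*}\mathscr{L}^{P}(\eta)\to\mathscr{O}_{X}$ whose composition is a nonzero scalar; equivalently, the composition pairing
$$\Hom(\mathsf{F}^{r}_{*}\mathscr{L}^{P}(\eta),\mathscr{O}_{X})\otimes\Hom(\mathscr{O}_{X},\mathsf{F}^{r}_{*}\mathscr{L}^{P}(\eta))\to K$$
must be nonzero. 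Unwinding the adjunctions identifies this pairing with
$$H^{0}(\mathscr{L}^{P}((p^{r}-1)\cdot 2\rho_{P}-\eta))\otimes H^{0}(\mathscr{L}^{P}(\eta))\xrightarrow{\ \mathrm{mult}\ }H^{0}(\mathscr{L}^{P}((p^{r}-1)\cdot 2\rho_{P}))\xrightarrow{\ \mathrm{tr}\ }K,$$
where $\mathrm{tr}$ is the global Grothendieck--Serre trace of $\mathsf{F}^{r}$. I would then invoke two classical results: (i) $\mathrm{tr}$ is nonzero because $G/P$ is Frobenius split (Mehta--Ramanathan), and iterating a splitting $r$ times produces a section of $\mathscr{L}^{P}((p^{r}-1)\cdot 2\rho_{P})$ whose trace equals $1$; and (ii) the multiplication map is surjective by the Ramanan--Ramanathan projective normality theorem (proved in any characteristic by Frobenius-splitting the diagonal in $G/P\times G/P$), applied to the dominant weights $\eta$ and $(p^{r}-1)\cdot 2\rho_{P}-\eta$. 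A surjection composed with a nonzero functional is nonzero, so the pairing does not vanish, giving the desired splitting.

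The crux of the argument is the surjectivity of multiplication, which is a nontrivial input from Frobenius-splitting theory. On $G/B$ this is classical, and it descends to $G/P$ via the projection $G/B\to G/P$, since $H^{0}(G/P,\mathscr{L}^{P}(\nu))=H^{0}(G/B,\mathscr{L}^{B}(\nu))$ for $P$-dominant $\nu$. Should any delicate borderline cases arise, the backup plan would be a direct construction producing an iterated Mehta--Ramanathan splitting section in $H^{0}(\mathscr{L}^{P}((p^{r}-1)\cdot 2\rho_{P}))$ explicitly as a $T$-equivariant product $\sigma\cdot\tau$, and verifying nonvanishing of its trace by a local computation on the open big cell $U^{-}P/P$.
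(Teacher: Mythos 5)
Your proposal is correct and follows essentially the same route as the paper: reduce to $\lambda=0$ by the projection formula, get $(1)\Rightarrow(2)$ from Frobenius--Serre duality plus the fact that $H^{0}$ of a line bundle on $G/P$ is nonzero exactly for dominant weights, and get $(2)\Rightarrow(1)$ from the surjectivity of multiplication of sections combined with the Mehta--Ramanathan Frobenius splitting of $G/P$. The only (cosmetic) difference is packaging: the paper isolates these two inputs in Lemma~\ref{DominanceSplittingLemma} (decomposing a section via the surjective cup product and pairing term by term, then splitting $(\mathsf{F}^{r}_{*}\mathscr{O}_{X})^{\vee}$), whereas you phrase the same computation as the composition pairing factoring through multiplication followed by a nonzero trace.
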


\begin{rmk}
By \cite[Chapter 2.2, Exercises (3)-(4)]{Brion-Kumar}, the forgetful map $\textnormal{Pic}_{G}(G/P)\to\textnormal{Pic}(G/P)$ is an isomorphism, so Theorem \ref{SummandOfLThm} indeed describes all line bundles in the Remak decomposition of $\mathsf{F}^{r}_{*}\mathscr{L}^{P}(\mu)$.
On the other hand, in general, $\mathsf{F}^{r}_{*}\mathscr{L}^{P}(\mu)$ will also have indecomposable direct summands that are not line bundles. If $X$ is a smooth, projective $K$-variety, then by a result of P.\ Achinger \cite{Achinger1} $\mathsf{F}_{*}\mathscr{L}$ is a direct sum of line bundles for every $\mathscr{L}\in\textnormal{Pic}(X)$ if and only if $X$ is toric. For $X=G/P$ this is the case if and only if $X$ is a product of projective spaces.
\end{rmk}

\begin{rmk}
Recently, Cai--Krylov \cite{Cai-Krylov} studied Frobenius pushforwards of line bundles on wonderful compactification and (among other things) they obtained a description of rank one summands of $\mathsf{F}^{r}_{*}\mathscr{L}$. While their setting is different from ours, there are some similarities. For example, the proof of (1)$\implies$(2) in our Theorem \ref{SummandOfLThm} is essentially the same as the proof of Corollary 3.5.1 in \textit{loc.cit}.
\end{rmk}

Second, we study the multiplicity of $\mathscr{O}_{G/P}$ as a direct summand of $\mathsf{F}^{r}_{*}\mathscr{L}^{P}(\mu)$. We prove the following.
\begin{thm}\label{MultiplicityofOxThm}
Let $\mu\in X(P)$. If $0\leq \langle \mu,\alpha^{\vee}\rangle\leq p^{r}-1$ holds for all $\alpha\in S$, then $H^{0}(G/P,\mathscr{L}^{P}(\mu))\otimes\mathscr{O}_{G/P}$ is a direct summand of $\mathsf{F}^{r}_{*}\mathscr{L}^{P}(\mu)$. 
\end{thm}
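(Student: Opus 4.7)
The plan is to identify the multiplicity of $\mathscr{O}_{G/P}$ in the Remak decomposition of $\mathsf{F}^r_*\mathscr{L}^P(\mu)$ with the rank of a bilinear pairing, and then to use Grothendieck duality for Frobenius to analyse it. By a standard Krull--Schmidt computation, for any vector bundle $\mathscr{E}$ on a proper variety and any line bundle $\mathscr{L}_0$, the multiplicity of $\mathscr{L}_0$ as a Remak summand of $\mathscr{E}$ equals the rank of the composition pairing $\Hom(\mathscr{L}_0,\mathscr{E})\otimes\Hom(\mathscr{E},\mathscr{L}_0)\to\mathrm{End}(\mathscr{L}_0)=K$. Taking $\mathscr{L}_0=\mathscr{O}_{G/P}$ and $\mathscr{E}=\mathsf{F}^r_*\mathscr{L}^P(\mu)$, the first factor is $H^0(G/P,\mathscr{L}^P(\mu))$ by the affineness of $\mathsf{F}^r$, so the multiplicity is automatically at most $N:=\dim H^0(G/P,\mathscr{L}^P(\mu))$, and the theorem is equivalent to showing that this pairing is non-degenerate in its first variable.

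To compute the second factor and the pairing, apply Grothendieck duality to the finite flat morphism $\mathsf{F}^r\colon G/P\to G/P$, whose relative dualizing sheaf is $\omega_{\mathsf{F}^r}=\omega_{G/P}^{1-p^r}=\mathscr{L}^P(2(p^r-1)\rho_P)$. This yields
$$\Hom(\mathsf{F}^r_*\mathscr{L}^P(\mu),\mathscr{O}_{G/P})\cong H^0\!\bigl(G/P,\mathscr{L}^P(2(p^r-1)\rho_P-\mu)\bigr),$$
and the composition pairing translates into the Frobenius trace pairing $(s,t)\mapsto\mathrm{Tr}(st)$, where $st\in H^0(G/P,\omega_{G/P}^{1-p^r})$ and $\mathrm{Tr}$ is the map on global sections induced by the Grothendieck trace $\mathsf{F}^r_*\omega_{G/P}^{1-p^r}\to\mathscr{O}_{G/P}$. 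The theorem thus reduces to the following claim: for every nonzero $s\in H^0(G/P,\mathscr{L}^P(\mu))$ there exists $t$ with $\mathrm{Tr}(st)\neq 0$.

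The hypothesis $0\leq\langle\mu,\alpha^\vee\rangle\leq p^r-1$ is used precisely here: it ensures in particular that $2(p^r-1)\rho_P-\mu$ is dominant, so $H^0(G/P,\mathscr{L}^P(2(p^r-1)\rho_P-\mu))$ is large enough to accommodate a dual basis to any basis of $H^0(G/P,\mathscr{L}^P(\mu))$. To prove the non-degeneracy I would combine the Mehta--Ramanathan Frobenius splitting of $G/P$---which supplies a section $\sigma\in H^0(G/P,\omega_{G/P}^{1-p^r})$ with $\mathrm{Tr}(\sigma)\neq 0$, chosen compatibly with Schubert divisors---with $G$-translations of $\sigma$, which produce an abundant family of trace-nonzero sections. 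The main obstacle is the uniformity in $s$: one must show that the $G$-translates of $\sigma$ cumulatively span enough of $s\cdot H^0(G/P,\mathscr{L}^P(2(p^r-1)\rho_P-\mu))$ to detect every nonzero $s$. A cleaner route would be to pick a $B$-weight basis of $H^0(G/P,\mathscr{L}^P(\mu))$ consisting of extremal sections, each of which vanishes along some Schubert divisor, and to construct the dual vectors $t_i$ explicitly from Frobenius splittings that are compatibly split with those divisors.
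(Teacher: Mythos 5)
Your reduction of the theorem to the non-degeneracy (in the first variable) of the composition pairing $\Hom(\Ox,\mathsf{F}^{r}_{*}\mathscr{L}^{P}(\mu))\otimes\Hom(\mathsf{F}^{r}_{*}\mathscr{L}^{P}(\mu),\Ox)\to K$, and the identification of the second factor with $H^{0}(G/P,\mathscr{L}^{P}(2(p^{r}-1)\rho_{P}-\mu))$ via duality for $\mathsf{F}^{r}$, are both correct and consistent with the paper's formula (\ref{LB-Frobenius duality}). But the proof stops exactly where the real work begins: the claim that for every nonzero $s\in H^{0}(G/P,\mathscr{L}^{P}(\mu))$ there is a $t$ with $\mathrm{Tr}(st)\neq 0$ is asserted, not proved, and you yourself flag the ``uniformity in $s$'' as an unresolved obstacle. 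The sketch offered in its place has concrete problems. First, a $B$-weight basis of $H^{0}(G/P,\mathscr{L}^{P}(\mu))$ does not consist of extremal sections (only the weights $w\mu$, $w\in W$, are extremal), so the proposed construction of dual vectors from splittings compatible with Schubert divisors does not even apply to most basis elements. Second, you say the hypothesis $0\leq\langle\mu,\alpha^{\vee}\rangle\leq p^{r}-1$ is used ``precisely'' to make $2(p^{r}-1)\rho_{P}-\mu$ dominant; but that dominance is the strictly weaker condition $\langle\mu,\alpha^{\vee}\rangle\leq(p^{r}-1)\langle 2\rho_{P},\alpha^{\vee}\rangle$ of Theorem \ref{SummandOfLThm}, which only guarantees that $\Ox$ occurs at least once. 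Any correct argument must use the full strength of $\mu\in X_{r}(T)$, so an argument that only invokes dominance cannot be complete.

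For comparison, the paper supplies the two missing inputs by different means. The ``uniformity in $s$'' is Lemma \ref{Injectivity_Lemma}: the evaluation map $H^{0}(\mu)\otimes\Ox\to\mathsf{F}^{r}_{*}\mathscr{L}^{P}(\mu)$ is injective, proved via Andersen's theorem that $\textnormal{soc}_{G_{r}}H^{0}(\mu)=L(\mu)$ is a simple $G_{r}$-module for $\mu\in X_{r}(T)$ (this is where the hypothesis enters essentially), combined with $G$-equivariance and transitivity. The production of ``dual vectors'' is replaced by Lemma \ref{Injectivity_Lemma_3}: since $(p^{r}-1)\rho-\mu$ is dominant (the second essential use of the hypothesis), one embeds $\mathscr{L}^{B}(\mu)$ into $\mathscr{L}^{B}((p^{r}-1)\rho)$ and uses the Andersen--Haboush isomorphism $\mathsf{F}^{r}_{*}\mathscr{L}^{B}((p^{r}-1)\rho)\simeq\mathscr{O}_{G/B}^{\oplus m}$ to embed $\mathsf{F}^{r}_{*}\mathscr{L}^{P}(\mu)$ into a trivial bundle; an injection of trivial bundles on a projective variety is a constant matrix of full rank and hence splits. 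If you want to salvage your route, you would essentially have to reprove these two facts in the language of the trace pairing.
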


\begin{rmk}
Since $\mathsf{F}$ is affine, it follows that $H^{0}(G/P,\mathsf{F}_{*}\mathscr{L})=H^{0}(G/P,\mathscr{L})$ (cf. Remark \ref{Frobenius_Affinity_Remark}). Therefore, in the situation of Theorem \ref{MultiplicityofOxThm}, $H^{0}(G/P,\mathscr{L}^{P}(\mu))\otimes\mathscr{O}_{G/P}$ is a maximal free summand of $\mathsf{F}^{r}_{*}\mathscr{L}^{P}(\mu)$. It follows that the multiplicity of $\Ox$ is given by $\dim_{K}H^{0}(G/P,\mathscr{L}^{P}(\mu))$, so it may be calculated by the well-known dimension formula of Weyl. 
\end{rmk}

\begin{rmk}
Let $P=B$ and $\mu=(p^{r}-1)\rho$. By a classical result of by H.\ H.\ Andersen \cite{Andersen_Frobenius} and W.\ J.\ Haboush \cite{Haboush_Kempf} we have $\mathsf{F}^{r}_{*}\mathscr{L}^{B}((p^{r}-1)\rho)\simeq H^{0}(G/B,\mathscr{L}^{B}((p^{r}-1)\rho))\otimes\mathscr{O}_{G/B}$. Theorem \ref{MultiplicityofOxThm} may be seen as a generalization of this result.
\end{rmk}

\medskip

\noindent
\textbf{Acknowledgments}
This work was supported by the project KAPIBARA funded by the European Research Council (ERC) under the European Union's Horizon 2020 research and innovation programme (grant agreement No 802787). I thank P.\ Achinger, N.\ Deshmukh, and A.\ Langer for stimulating discussions.

\section{Motivation}\label{Section: Motivation}

Before we proceed with the proof of Theorems \ref{SummandOfLThm} and \ref{MultiplicityofOxThm}, let us motivate our work by explaining how these results fit into a bigger picture. First, we mention that it is quite rare to know the Remak decomposition of $\mathsf{F}_{*}^{r}\mathscr{L}$ for a given smooth projective $X$ and for all $\mathscr{L}\in \textnormal{Pic}(X)$. 
Such decompositions are known if $X$ is either a toric variety or a quadric hypersurface in some projective space. In the toric case $\mathsf{F}_{*}^{r}\mathscr{L}$ is a direct sum of line bundles by the work of J.\ F.\ Thomsen \cite{Thomsen} (see also \cite{Bogvad} by R.\ B{\o}gvad). P.\ Achinger derived from this result a combinatorial description of the indecomposable summands of $\mathsf{F}_{*}^{r}\mathscr{L}$ (and their multiplicities) for all $\mathscr{L}$ in \cite{Achinger1}. In the case of quadrics, the description of $\mathsf{F}_{*}^{r}\mathscr{L}$ follows from the well-known classification of arithmetically Cohen--Macaulay vector bundles on these varieties. The decomposition of $\mathsf{F}_{*}^{r}\mathscr{L}$ was first described by A.\ Langer in \cite{Langer} and later refined by P.\ Achinger in \cite{Achinger2}. 
\medskip

Let us now focus on the case where $X=G/P$ is a partial flag variety. If $X$ is neither a product of projective spaces nor a quadric (hence it is not covered by the discussion in the previous paragraph), then not much is known about the direct summands of $\mathsf{F}^{r}_{*}\mathscr{L}$ for general $\mathscr{L}\in \textnormal{Pic}(X)$. On the other hand, for some special $\mathscr{L}$ the decomposition of $\mathsf{F}^{r}_{*}\mathscr{L}$ is known. One example is the aforementioned theorem of Andersen--Haboush which, among other things, provides a simple proof of Kempf's vanishing theorem for line bundles on $G/B$. It is also an interesting problem to study the Remak decomposition $\mathsf{F}_{*}^{r}\Ox$. On the one hand, such a decomposition allows to determine whenever $\mathsf{F}_{*}^{r}\Ox$ is a tilting generator of the derived category $D^{b}(X)$  (see, for example, the work of Hashimoto--Kaneda--Rumynin \cite{Hashimoto-Kaneda-Rumynin} for the case of $\textnormal{SL}_{3}/B$, Kandeda,\ M. \cite{Kaneda_1} for the case of type $G_{2}$, and Raedschelders--\v{S}penko--Van den Bergh \cite{Readschelders-Spenko-VdBergh_1}, \cite{Readschelders-Spenko-VdBergh_2} for the case of the grassmannian $\textnormal{Grass}(2,n)$). On the other hand, it is well known that on a flag variety the vanishing of $\textnormal{Ext}_{\Ox}^{i}(\mathsf{F}_{*}^{r}\Ox,\mathsf{F}_{*}^{r}\Ox)$ for all $r,i>0$ implies the $D$-affinity of $X$ (see, for example, the work of B.\ Haastert \cite{Haastert}, Kashiwara--Lauritzen \cite{Kashiwara-Lauritzen}, A.\ Langer \cite{Langer}, and A.\ Samokhin \cite{Samokhin-D-affinity_1} for the further discussion on $D$-affinity of flag varieties in positive characteristic). Therefore, the Remak decomposition of $\mathsf{F}^{r}_{*}\Ox$ carries a lot of information about the geometry of $X$.
\medskip

In connection with the above discussion, we include in the text two examples concerning $\mathsf{F}_{*}^{r}\Ox$, where $X=G/B$ is a full flag variety. In Example \ref{Ox-example_1}, we describe the line bundles that are direct summands of $\mathsf{F}_{*}^{r}\Ox$. In particular, $\mathscr{L}^{B}(-\rho)$ is such a summand, and in Example \ref{Ox-example_2} we compute its multiplicity. This answers the question posed by Gros--Kaneda at the end of \cite{Gros-Kaneda}.

\section{Preliminaries on flag varieties}

To prove Theorems \ref{SummandOfLThm} and \ref{MultiplicityofOxThm}, we need the following well-known facts from representation theory and algebraic geometry.
\medskip

We keep the notation and the assumptions from Section \ref{Introduction}. In what follows, we try to be consistent with the notation used in Jantzen's monograph \cite{Jantzen_book}. Recall that $X(T)$ is the group of characters of $T$. The set of dominant weights is
\[
X(T)_{+}\stackrel{\textnormal{def.}}{=}\left\{\mu\in X(T):\langle \mu,\alpha^{\vee}\rangle\geq 0\textnormal{ for all }\alpha\in S\right\}.
\]
We have $X(T)=X(B)$. If $B\subset P$ is a parabolic subgroup, then $P$ is determined by a subset $I\subset S$ of the set of simple roots. This allows to realize $X(P)$ as a subgroup of $X(T)$
\[
X(P)=
\left\{
\mu\in X(T):\langle \mu,\alpha^{\vee}\rangle=0\textnormal{ for all }\alpha\in I
\right\}
\]
(see \cite[II, Section 1.18, Fromula (4)]{Jantzen_book}). We set
\[
\rho\stackrel{\textnormal{def.}}{=}\frac{1}{2}\sum_{\alpha\in R_{+}}\alpha\in \frac{1}{2}X(T)
\]
($R_{+}$ is the set of positive roots), and more generally,
\[
\rho_{P}\stackrel{\textnormal{def.}}{=}\frac{1}{2}\sum_{\alpha\in R_{+}\setminus R_{I}}\alpha\in \frac{1}{2}X(T)
\]
($R_{I}=R_{+}\cap\ZZ I$). We denote
\[
H^{0}(\mu)\stackrel{\textnormal{def}.}{=}H^{0}(G/B,\mathscr{L}^{B}(\mu)).
\]
By \cite[II, Section 4.6, Proposition]{Jantzen_book}
\begin{equation}\label{G/BtoG/P_Setions}
H^{i}(G/B,\mathscr{L}^{}(\mu))=H^{i}(G/P,\mathscr{L}^{P}(\mu))\quad(i\geq0,\ \mu\in X(P)).
\end{equation}
It is well known \cite[II, Section 2.6]{Jantzen_book} that
\begin{equation}\label{Non-zeroH0}
H^{0}(\mu)\neq0\iff \mu\in X_{+}(T).
\end{equation}
We will also need the nontrivial but equally well known fact \cite[II, Section 14.20]{Jantzen_book} that for $\mu,\lambda\in X_{+}(T)$ the cup product
\begin{equation}\label{SurjectiveCupProduct}
H^{0}(\mu)\otimes H^{0}(\lambda)\to H^{0}(\mu+\lambda)
\end{equation}
is surjective.
\medskip

Let us now recall some basic facts about vector bundles over $X=G/P$.  We write $\omega_{X}$ for the canonical line bundle. We have \cite[II, Section 4.2, Formula (6)]{Jantzen_book}
\begin{equation}\label{LittleOmega}
\omega_{X}=\mathscr{L}^{P}(-2\rho_{P}).
\end{equation}
Since the Frobenius morphism is affine, the relative version of Serre's duality gives, for a vector bundle $\eE$ over $X$
\begin{equation}\label{Frobenius-Serre duality}
(\mathsf{F}^{r}_{*}\eE)^{\vee}=\left(\mathsf{F}_{*}^{r}(\eE^{\vee}\otimes\omega_{X})\right)\otimes\omega_{X}^{\vee}.
\end{equation}
For any line bundle $\mathscr{L}$ we have 
\begin{equation}\label{FrobeniusPullback}
(\mathsf{F}^{r})^{*}\mathscr{L}=\mathscr{L}^{\otimes p^{r}}.
\end{equation}
From (\ref{Frobenius-Serre duality}), (\ref{FrobeniusPullback}), and the projection formula, we obtain
\begin{equation}\label{Frobenius-Serre duality2}
(\mathsf{F}^{r}_{*}\eE)^{\vee}=\mathsf{F}^{r}_{*}(\eE^{\vee}\otimes\omega_{X}^{\otimes(1-p^{r})}).
\end{equation}
Since $\mathscr{L}^{P}(\mu)^{\vee}=\mathscr{L}^{P}(-\mu)$, we can combine (\ref{LittleOmega}) and (\ref{Frobenius-Serre duality2}) to obtain
\begin{equation}\label{LB-Frobenius duality}
(\mathsf{F}^{r}_{*}\mathscr{L}^{P}(\mu))^{\vee}=\mathsf{F}^{r}_{*}\mathscr{L}^{P}(2(p^{r}-1)\rho_{P}-\mu).
\end{equation}
Finally, we recall from \cite[Theorem 2.2.5]{Brion-Kumar} that $X$ is $F$-split, i.e., $\Ox$ is a direct summand of $\mathsf{F}_{*}\Ox$ (and therefore a direct summand of $\mathsf{F}^{r}_{*}\Ox$ for all $r\geq1$).

\section{Rank one summands of $\mathsf{F}_{*}\mathscr{L}$}

In this section, we prove Theorem \ref{SummandOfLThm}. We keep the notation and the assumptions from Section \ref{Introduction}. We let $X=G/P$ be a partial flag variety.

\begin{lem}\label{DominanceSplittingLemma}
Assume that $\mu_{1},\ \mu_{2},\ \mu_{1}-\mu_{2}\in X_{+}(T)\cap X(P)$. If $\Ox$ is a direct summand of $\mathsf{F}^{r}_{*}\mathscr{L}^{P}(\mu_{1})$ then it is also a direct summand of $\mathsf{F}^{r}_{*}\mathscr{L}^{P}(\mu_{2})$.
\end{lem}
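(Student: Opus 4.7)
The plan is to start with an inclusion/projection realizing $\Ox$ as a summand of $\mathsf{F}^r_*\mathscr{L}^P(\mu_1)$, twist the projection by multiplication by a section of $\mathscr{L}^P(\mu_1-\mu_2)$, and then use the surjectivity of the cup product (equation \ref{SurjectiveCupProduct}) to locate a section $s\in H^0(\mu_2)$ whose induced inclusion $\Ox\to \mathsf{F}^r_*\mathscr{L}^P(\mu_2)$ composes with this twisted projection to give a nonzero scalar.

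Concretely, first identify $\Hom(\Ox,\mathsf{F}^r_*\mathscr{L}^P(\mu))=H^0(\mu)$ for every $\mu\in X(P)$ via the $(\mathsf{F}^{r*},\mathsf{F}^r_*)$-adjunction, and denote by $\alpha_1\colon\Ox\to \mathsf{F}^r_*\mathscr{L}^P(\mu_1)$ and $\beta_1\colon\mathsf{F}^r_*\mathscr{L}^P(\mu_1)\to\Ox$ the given summand inclusion and projection, so that $\beta_1\circ\alpha_1=\mathrm{id}$; let $s_1\in H^0(\mu_1)$ correspond to $\alpha_1$. For any $t\in H^0(\mu_1-\mu_2)$, applying $\mathsf{F}^r_*$ to the multiplication map $m_t\colon\mathscr{L}^P(\mu_2)\to\mathscr{L}^P(\mu_1)$ produces $T_t\colon\mathsf{F}^r_*\mathscr{L}^P(\mu_2)\to\mathsf{F}^r_*\mathscr{L}^P(\mu_1)$, from which one forms $\beta_2:=\beta_1\circ T_t\colon\mathsf{F}^r_*\mathscr{L}^P(\mu_2)\to\Ox$. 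By naturality of the adjunction applied to $m_t$, for each $s\in H^0(\mu_2)$ giving $\alpha_s\colon\Ox\to\mathsf{F}^r_*\mathscr{L}^P(\mu_2)$, the composition $T_t\circ\alpha_s$ corresponds to the cup product $s\cdot t\in H^0(\mu_1)$; hence $\beta_2\circ\alpha_s$ equals the scalar obtained by applying $\beta_1\colon H^0(\mu_1)\to K$ to $s\cdot t$.

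To conclude, note that $\mu_2$ and $\mu_1-\mu_2$ are dominant, so equation (\ref{SurjectiveCupProduct}) (combined with (\ref{G/BtoG/P_Setions}) to transfer between $G/B$ and $G/P$) lets us write $s_1=\sum_i s^{(i)}\cdot t^{(i)}$ with $s^{(i)}\in H^0(\mu_2)$ and $t^{(i)}\in H^0(\mu_1-\mu_2)$. Since $\beta_1(s_1)=1$, some $\beta_1(s^{(i)}\cdot t^{(i)})$ is nonzero; taking $s:=s^{(i)}$ and $t:=t^{(i)}$ for that index $i$ yields $\beta_2\circ\alpha_s\neq 0$ in $K$, and after rescaling $\alpha_s$ by the inverse of this scalar, $\alpha_s$ and $\beta_2$ realize $\Ox$ as a direct summand of $\mathsf{F}^r_*\mathscr{L}^P(\mu_2)$.

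The only delicate point is the second paragraph's identification of $T_t\circ\alpha_s$ with the cup product $s\cdot t$ on global sections; this is a routine check via the naturality of the $(\mathsf{F}^{r*},\mathsf{F}^r_*)$-adjunction and the fact that $\mathsf{F}^{r*}\Ox=\Ox$, so I do not expect it to be a real obstacle. The substantive input is that the cup product surjection (\ref{SurjectiveCupProduct}) gives us enough flexibility to write any section of $H^0(\mu_1)$ as a combination of products coming from $H^0(\mu_2)\otimes H^0(\mu_1-\mu_2)$.
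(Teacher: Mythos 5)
Your proposal is correct and follows essentially the same route as the paper's own proof: both realize the summand via a section--projection pair with nonzero pairing, use the surjectivity of the cup product (\ref{SurjectiveCupProduct}) to decompose the section of $H^{0}(\mu_{1})$ into products from $H^{0}(\mu_{2})\otimes H^{0}(\mu_{1}-\mu_{2})$, and push forward the multiplication-by-$t$ map under $\mathsf{F}^{r}_{*}$ to transport the projection. The adjunction/naturality bookkeeping you flag as the delicate point is exactly the step the paper handles by observing that $\mathsf{F}^{r}_{*}\varphi'$ is still given by multiplication by $t$ on global sections, so there is no real gap.
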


\begin{proof}
Note that $\Ox$ is a direct summand of a vector bundle $\eE$ if and only if there exists a global section $s\in H^{0}(X,\eE)$ and an $\Ox$-linear map $\psi_{s}\colon \eE\to \Ox$ with $\psi_{s}(s)\neq0$. So, assume that we have such a section $s\in H^{0}(X,\mathsf{F}^{r}_{*}\mathscr{L}^{P}(\mu_{1}))=H^{0}(\mu_{1})$ and such a morphism $\psi_{s}
\colon\mathsf{F}^{r}_{*}\mathscr{L}^{P}(\mu_{1})\to\Ox$. 
The key observation is that because of (\ref{SurjectiveCupProduct}) we may write
\[
s=\sum s_{i}\otimes t_{i};\quad s_{i}\in H^{0}(\mu_{2}),\ t_{i}\in H^{0}(\mu_{1}-\mu_{2}).
\]
Since $\psi_{s}(s)\neq 0$, it follows that for some index $i$ we have $\psi_{s}(s_{i}\otimes t_{i})\neq 0$. Now, consider the $\Ox$-linear map
\begin{equation}\label{MultiplicationByaSection}
\varphi':\mathscr{L}^{P}(\mu_{2})\to\mathscr{L}^{P}(\mu_{1});\quad u\mapsto u\otimes t_{i}.
\end{equation}
This induces an $\Ox$-linear map
\[
\varphi=\mathsf{F}^{r}_{*}\varphi':\mathsf{F}^{r}_{*}\mathscr{L}^{P}(\mu_{2})\to\mathsf{F}^{r}_{*}\mathscr{L}^{P}(\mu_{1}),
\]
which at the level of global sections is still given by the formula (\ref{MultiplicationByaSection}). By construction
\[
(\psi_{s}\circ\varphi)(s_{i})=\psi_{s}(s_{i}\otimes t_{i})\neq 0,
\]
which shows that $\Ox$ is indeed a direct summand of $\mathsf{F}^{r}_{*}\mathscr{L}^{P}(\mu_{2})$.
\end{proof}

\noindent
We are ready to prove the Theorem \ref{SummandOfLThm}.

\begin{proof}[Proof of Theorem \ref{SummandOfLThm}]
It follows from the projection formula that $\mathscr{L}^{P}(\lambda)$ is a direct summand of $\mathsf{F}^{r}_{*}\mathscr{L}^{P}(\mu)$ if and only if $\Ox$ is a direct summand of $\mathsf{F}^{r}_{*}\mathscr{L}^{P}(\mu-p^{r}\lambda)$. After replacing $\mu-p^{r}\lambda$ with $\mu$, this observation reduces the proof of Theorem \ref{SummandOfLThm} to showing that the following statements are equivalent.
\begin{enumerate}
    \item[(1')] $\Ox$ is a direct summand of $\mathsf{F}^{r}_{*}\mathscr{L}^{P}(\mu)$.
    \item[(2')] The inequality $0\leq\langle\mu,\alpha^{\vee}\rangle\leq (p^{r}-1)\langle2 \rho_{P},\alpha^{\vee}\rangle$
    holds for any $\alpha\in S$.
\end{enumerate}

\noindent
(1')$\implies$(2'). If $\Ox$ is a direct summand of a vector bundle $\eE$ then it is also a direct summand of $\eE^{\vee}$, so both $\eE$ and $\eE^{\vee}$ have a non-zero global section. We have
\[
H^{0}(X,\mathsf{F}^{r}_{*}\mathscr{L}^{P}(\mu))=H^{0}(\mu),
\]
and
\[
H^{0}(X,\mathsf{F}^{r}_{*}\mathscr{L}^{P}(\mu)^{\vee})=H^{0}(2(p^{r}-1)\rho_{P}-\mu)
\]
by the affinity of Frobenius, (\ref{G/BtoG/P_Setions}), and (\ref{LB-Frobenius duality}). The result follows from (\ref{Non-zeroH0}).
\medskip

\noindent
(2')$\implies$(1'). Assume that $\mu$ satisfies (2'). Then 
\[
2(p^{r}-1)\rho_{P},\ \mu,\  2(p^{r}-1)\rho_{P}-\mu\in X_{+}(T)\cap X(P),
\]
so by Lemma \ref{DominanceSplittingLemma} we only have to show that $\mathsf{F}^{r}_{*}\mathscr{L}(2(p^{r}-1)\rho_{P})$ has $\Ox$ as a direct summand. By (\ref{LB-Frobenius duality}) we have
\begin{equation*}
\mathsf{F}^{r}_{*}\mathscr{L}(2(p^{r}-1)\rho_{P})=(\mathsf{F}^{r}_{*}\Ox)^{\vee},
\end{equation*}
so the claim follows from the fact that $X$ is $F$-split.
\end{proof}

\begin{ex}\label{Ox-example_1}
Let $X=G/B$ be the full flag variety. We will describe all line bundles that appear as direct summands of $\mathsf{F}^{r}_{*}\Ox$. Let $S=\{\alpha_{1},\dots,\alpha_{n}\}$, and let  $\omega_{1},\dots,\omega_{n}\in X(T)$ be the fundamental weights (that is, $\langle \omega_{i},\alpha_{j}^{\vee}\rangle=\delta_{ij}$). Then $\rho=\sum_{i=1}^{n}\omega_{i}$ and therefore 
\begin{equation}\label{RhoPairing}
\langle \rho,\alpha_{j}^{\vee}\rangle=1\qquad(1\leq j\leq n).
\end{equation}
It follows that $\mathscr{L}^{B}(\lambda)$ is a direct summand of $\mathsf{F}_{*}^{r}\Ox$ if and only if 
\begin{equation}\label{SummandsOfOX}
-\lambda=\omega_{i_{1}}+\omega_{i_{2}}+\dots+\omega_{i_{m}}\qquad(\ 1\leq i_{1}<i_{2}<\dots<i_{m}\leq n).
\end{equation}
Indeed, by Theorem \ref{SummandOfLThm} and (\ref{RhoPairing}) we known that $\mathscr{L}^{B}(\lambda)$ is a summand of $\mathsf{F}_{*}^{r}\Ox$ if and only if
\begin{equation}\label{OxPairing1}
0\leq \langle -\lambda,\alpha_{j}^{\vee}\rangle\leq\frac{2(p^{r}-1)}{p^{r}}\qquad(1\leq j\leq n).
\end{equation}
However, the paring $\langle -,-\rangle$ takes only integral values, and since $1\leq \frac{2(p^{r}-1)}{p^{r}}<2$, we may rewrite (\ref{OxPairing1}) as
\begin{equation}\label{OxPairing2}
\langle -\lambda,\alpha_{j}^{\vee}\rangle\in\{0,1\}\qquad(1\leq j\leq n).
\end{equation}
It is clear that $-\lambda$ satisfies (\ref{OxPairing2}) if and only if it is of form (\ref{SummandsOfOX}).
\end{ex}

\begin{rmk}
Let $X$ be a smooth projective variety. Recall that a vector bundle $\eE$ is a \textit{Frobenius summand} if it is a direct summand of $\mathsf{F}_{*}^{r}\Ox$ for some $r\geq0$ and that $X$ is of \textit{globally finite F-representation type} (for short: GFFRT) if the set of isomorphism classes of its Frobenius summands is finite. Among partial flag varieties, projective spaces, grassmannians $\textnormal{Gr}(2,n)$, and quadrics are known to be GFFRT (in the case of $
\PP^{n}$ this follows easily from the fact that $\mathsf{F}^{r}_{*}\mathscr{O}_{\PP^{n}}$ is a direct sum of line bundles. For the remaining two cases, see \cite{Readschelders-Spenko-VdBergh_2} and \cite{Langer}). It is an interesting problem to determine which partial flag varieties are GFFRT. Example \ref{Ox-example_1} shows that at least the number of line bundles that are Frobenius summands of $G/B$ is finite. An easy modification of this example shows that the same is true for all partial flag varieties.
\end{rmk}

\section{Frobenius kernels}

From now till the end of this paper, we work towards the proof of Theorem
\ref{MultiplicityofOxThm}. We keep the notation and the assumptions from the previous sections. In particular, $G$ is a semi-simple, simply connected algebraic group, and $X=G/P$ is a partial flag variety. It is also convenient to denote, for a positive integer $r$,
\begin{equation}\label{Xr}
X_{r}(T)\stackrel{\textnormal{def.}}{=}\left\{\mu\in X(T):0\leq\langle \mu,\alpha^{\vee}\rangle\leq p^{r}-1\textnormal{ for all }\alpha\in S\right\}.
\end{equation}
With this notation, the goal of this section is to prove the following lemma. In the next section, we use it to prove Theorem \ref{MultiplicityofOxThm}.

\begin{lem}\label{Injectivity_Lemma}
The evaluation map
$
\textnormal{ev}:H^{0}(\mu)\otimes\mathscr{O}_{G/P}\to\mathsf{F}^{r}_{*}\mathscr{L}^{P}(\mu)
$
is injective for every $\mu\in X_{r}(T)\cap X(P)$.
\end{lem}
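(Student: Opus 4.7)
The plan is to reduce from $G/P$ to $G/B$, use $G$-equivariance on $G/B$ to reduce injectivity of $\textnormal{ev}$ to a single fiber at $eB$, and conclude via the Andersen--Haboush theorem together with the cup product surjectivity (\ref{SurjectiveCupProduct}).

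First I would reduce to the case $P=B$. Let $\pi\colon G/B\to G/P$ denote the projection. For $\mu\in X(P)$ one has $\pi^{*}\mathscr{L}^{P}(\mu)=\mathscr{L}^{B}(\mu)$, and the projection formula together with $\pi_{*}\mathscr{O}_{G/B}=\mathscr{O}_{G/P}$ gives $\pi_{*}\mathscr{L}^{B}(\mu)=\mathscr{L}^{P}(\mu)$, $\pi_{*}\mathsf{F}^{r}_{*}\mathscr{L}^{B}(\mu)=\mathsf{F}^{r}_{*}\mathscr{L}^{P}(\mu)$, and $\pi_{*}(H^{0}(\mu)\otimes\mathscr{O}_{G/B})=H^{0}(\mu)\otimes\mathscr{O}_{G/P}$. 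The evaluation maps on $G/B$ and $G/P$ are both adjoint to the identity on $H^{0}(\mu)$, so $\pi_{*}\textnormal{ev}_{G/B}=\textnormal{ev}_{G/P}$; since $\pi_{*}$ is left exact, injectivity for $G/B$ transfers to $G/P$. Moreover, on $G/B$ the map $\textnormal{ev}$ is $G$-equivariant, so its kernel is a $G$-equivariant coherent subsheaf of the trivial bundle $H^{0}(\mu)\otimes\mathscr{O}_{G/B}$; such a subsheaf is necessarily locally free and of the form $G\times^{B}W$ for some $B$-submodule $W\subset H^{0}(\mu)$, namely the fiber at $eB$. Since the scheme-theoretic fiber of $\mathsf{F}^{r}\colon G/B\to G/B$ at $eB$ is the Frobenius-kernel neighborhood $G_{r}/B_{r}$, the vanishing of $W$ is equivalent to injectivity of the restriction $\textnormal{res}\colon H^{0}(\mu)\to H^{0}(G_{r}/B_{r},\mu)$.

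For the heart of the argument, set $\mu':=(p^{r}-1)\rho-\mu$. Since $\langle\rho,\alpha^{\vee}\rangle=1$ for every simple root, the hypothesis $\mu\in X_{r}(T)$ yields $\mu'\in X_{r}(T)\cap X_{+}(T)$, so $H^{0}(\mu')\neq0$ by (\ref{Non-zeroH0}). By the Andersen--Haboush theorem $\mathsf{F}^{r}_{*}\mathscr{L}^{B}((p^{r}-1)\rho)\simeq H^{0}((p^{r}-1)\rho)\otimes\mathscr{O}_{G/B}$, so passing to fibers at $eB$ produces an isomorphism $H^{0}((p^{r}-1)\rho)\xrightarrow{\sim} H^{0}(G_{r}/B_{r},(p^{r}-1)\rho)$. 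Restriction to $G_{r}/B_{r}$ intertwines the cup product with the product on sections over $G_{r}/B_{r}$, giving a commutative square
\[
\begin{array}{ccc}
H^{0}(\mu)\otimes H^{0}(\mu') & \twoheadrightarrow & H^{0}((p^{r}-1)\rho) \\
\downarrow & & \downarrow \\
H^{0}(G_{r}/B_{r},\mu)\otimes H^{0}(G_{r}/B_{r},\mu') & \to & H^{0}(G_{r}/B_{r},(p^{r}-1)\rho)
\end{array}
\]
in which the top row is surjective by (\ref{SurjectiveCupProduct}) and the right vertical is an isomorphism. Taking $v\in\ker(\textnormal{res})$, for every $w\in H^{0}(\mu')$ the product $v\cdot w$ restricts to zero, hence $v\cdot w=0$ in $H^{0}((p^{r}-1)\rho)$. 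Restricting both factors to the big cell $U^{-}\subset G/B$, whose coordinate ring is an integral domain, and picking $w\neq0$ (so that $\tilde{w}\neq0$ in $K[U^{-}]$ by density), we deduce $\tilde{v}=0$ in $K[U^{-}]$, and therefore $v=0$.

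The main obstacle is assembling the commutative square in the last paragraph: one must identify the fiber of $\textnormal{ev}$ at $eB$ with the classical restriction to $G_{r}/B_{r}$ and verify that cup product commutes with this restriction. Once these compatibilities are in place, the Andersen--Haboush isomorphism together with the integrality of $K[U^{-}]$ makes the deduction immediate.
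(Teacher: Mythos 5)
Your argument is correct in substance, but it reaches the conclusion by a genuinely different route than the paper. Both proofs follow the Haboush pattern of using $G$-equivariance and transitivity to reduce injectivity of $\textnormal{ev}$ to a single fiber, identified with the restriction map $H^{0}(\mu)\to H^{0}(G_{r}/B_{r},\mu)$; but where the paper then cites Andersen's computation of the $G_{r}$-socle, $\textnormal{soc}_{G_{r}}H^{0}(\mu)=L(\mu)$ for $\mu\in X_{r}(T)$, and runs a socle/transitivity contradiction (Lemma \ref{Injectivity_lemma_2}), you instead prove injectivity of the restriction map directly: the Andersen--Haboush isomorphism makes restriction an isomorphism for the Steinberg weight $(p^{r}-1)\rho$, and since restriction is multiplicative and $G/B$ is integral, $\textnormal{res}(v)=0$ forces $v\cdot w=0$ for a nonzero $w\in H^{0}((p^{r}-1)\rho-\mu)$ and hence $v=0$. (Note that the surjectivity of the cup product (\ref{SurjectiveCupProduct}) plays no role in your deduction; you only use $H^{0}(\mu')\neq 0$ and integrality.) Your route buys a more elementary and self-contained argument: it reuses the Andersen--Haboush theorem, which the paper needs anyway in Lemma \ref{Injectivity_Lemma_3}, and avoids the nontrivial socle formula (\ref{Gr-socle}); the paper's route is more structural and isolates a reusable statement (Lemma \ref{Injectivity_lemma_2}) valid for arbitrary equivariant bundles and submodules with simple $G_{r}$-socle. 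One point you should tighten: the absolute Frobenius $\mathsf{F}^{r}\colon G/B\to G/B$ is neither a $K$-morphism nor $G$-equivariant for the standard actions, so "the kernel of $\textnormal{ev}$ is a $G$-equivariant subsheaf'' and "the scheme-theoretic fiber at $eB$'' only make literal sense after replacing the target by the Frobenius twist $(G/B)^{(r)}$ and using the $K$-linear Frobenius $\mathsf{F}^{(r)}$, exactly as the paper sets up; since injectivity is unaffected by the semilinear identification $\theta_{r}$, this is a presentational fix rather than a gap. Your reduction from $G/P$ to $G/B$ via $\pi_{*}$ and left exactness is fine and mirrors the manipulation in Lemma \ref{Injectivity_Lemma_3}.
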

\begin{rmk}\label{Frobenius_Affinity_Remark}
In the above, we make an identification $H^{0}(\mu)=H^{0}(X,\mathsf{F}^{r}_{*}\mathscr{L}^{P}(\mu))$ which follows from the affinity of $\mathsf{F}$. Although this identification is merely semi-linear ($\mathsf{F}$ is not a morphism of $K$-varieties), we have $\dim_{K}H^{0}(\mu)=\dim_{K}H^{0}(X,\mathsf{F}^{r}_{*}\mathscr{L}^{P}(\mu))$, because $K$ is algebraically closed (hence, perfect).
\end{rmk}

To prove Lemma \ref{Injectivity_Lemma} we use the $K$-linear Frobenius morphism. We have, for any $K$-variety $Y$, the pullback diagram
\[
\begin{tikzcd}
Y^{(r)}\arrow{r}{\theta_{r}}\arrow{d}&Y\arrow{d}\\
\textnormal{Spec }K\arrow{r}&\textnormal{Spec }K,
\end{tikzcd}
\]
where the bottom arrow is induced by the map $a\mapsto a^{p^{r}}$ on $K$. The projection $\theta_{r}$ is an isomorphism of schemes (but not $K$-schemes). In particular,
\begin{equation}\label{q-pushforward}
\theta_{r*}\mathscr{O}_{Y^{(r)}}=\mathscr{O}_{Y}.
\end{equation}
The absolute Frobenius $\mathsf{F}^{r}:Y\to Y$ induces, via the universal property of the fibered product, a map $\mathsf{F}^{(r)}:Y\to Y^{(r)}$, the $K$-\textit{linear Frobenius morphism}. From the definition,
\begin{equation}\label{q-F-composition}
q^{r}\circ \mathsf{F}^{(r)}=\mathsf{F}^{r}.
\end{equation}
Moreover, the association $Y\mapsto Y^{(r)}$ is functorial and commutes with products. Using these elementary properties it is easy to verify that $\mathsf{F}^{(r)}:G\to G^{(r)}$ is a morphism of group schemes over $K$. Moreover, if $X$ is a $G$-variety via $\mu:G\times X\to X$, then $X^{(r)}$ is a $G^{(r)}$-variety via $\mu^{(r)}:G^{(r)}\times X^{(r)}\to X^{(r)}$, and therefore also a $G$-variety via $\mu^{(r)}\circ(F^{(r)}\times \textnormal{id}_{X})$. Furthermore, $\mathsf{F}^{(r)}:X\to X^{(r)}$ is $G$-equivariant with respect to these actions. In particular, if $\eE$ is a $G$-equivariant vector bundle over $X$ then $\mathsf{F}^{(r)}_{*}\eE$ is $G$-equivariant in a natural way and the equality 
\[
H^{0}(X,\eE)=H^{0}(X^{(r)},\mathsf{F}^{(r)}_{*}\eE)
\]
holds in the category of $G$-modules. The group scheme
\[
G_{r}\stackrel{\textnormal{def.}}{=}\ker
\left(
\mathsf{F}^{(r)}:G\to G^{(r)}
\right)
\]
is called the $r$-th \textit{Frobenius kernel}. We refer the reader to \cite[I, Chapter 9 and II, Chapter 3]{Jantzen_book} for basic facts about $G_{r}$-modules. For $\mu\in X(T)$ we denote by $L(\mu)$ the unique irreducible $G$-module of the highest weight $\mu$. By \cite[II, Section 3.10, Proposition]{Jantzen_book} the irreducible $G_{r}$-modules are parametrized by $X_{r}(T)$ and given $\mu\in X_{r}(T)$ we write $L_{r}(\mu)$ for the irreducible $G_{r}$-module. Furthermore, by \cite[II, Section 3.15, Proposition]{Jantzen_book} we have an equality of $G_{r}$-modules
\begin{equation}\label{Simple_Gr-modules}
L(\mu)=L_{r}(\mu)\qquad(\mu\in X_{r}(T)).
\end{equation}
Given a $G$-module $M$ we can restrict it to $G_{r}$ and consider its socle $\textnormal{soc}_{G_{r}}M$. This socle is a $G$-module in a natural way \cite[II, Section 3.16]{Jantzen_book}. Moreover, by \cite[Formula 4.2]{Andersen_G_r}
\begin{equation}\label{Gr-socle}
\textnormal{soc}_{G_{r}}H^{0}(\mu)=L_{r}(\mu)\qquad (\mu\in X_{r}(T)).
\end{equation}
In particular, this socle is simple as a $G_{r}$-module by (\ref{Simple_Gr-modules}).
\medskip

Lemma \ref{Injectivity_Lemma} follows from the lemma below, which is essentially a generalization of the argument given by W.\ J.\ Haboush in his simple proof of Kempf's vanishing theorem \cite{Haboush_Kempf}.
\begin{lem}\label{Injectivity_lemma_2}
Let $\eE$ be a $G$-equivariant vector bundle over $X^{(r)}$ and let $V\subset H^{0}(X^{(r)},\eE)$ be a $G$-module. If $\textnormal{soc}_{G_{r}}V$ is a simple $G_{r}$-module then the evaluation map 
    $
    \textnormal{ev}_{V}:V\otimes  \mathscr{O}_{X^{(r)}}\to \eE
    $
is injective.
\end{lem}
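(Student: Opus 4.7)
The plan is to analyze the kernel $\mathscr{K}$ of the evaluation map $\textnormal{ev}_{V}$ and show that it vanishes. The key point is that $\mathscr{K}$, being $G$-equivariant on a $G$-homogeneous variety, is controlled by its fiber at a single point, and the simplicity of $\textnormal{soc}_{G_{r}}V$ will force this fiber to be zero.

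First I would identify $X^{(r)}$ with the homogeneous $G$-scheme $G/\widetilde{P}$, where $\widetilde{P}:=(\mathsf{F}^{(r)})^{-1}(P^{(r)})$ is the scheme-theoretic stabilizer of the base point $x_{0}$; this is a subgroup scheme of $G$ containing $G_{r}$. Under the standard equivalence between $G$-equivariant coherent sheaves on $G/\widetilde{P}$ and finite-dimensional $\widetilde{P}$-representations, $\mathscr{K}$, its image $\mathscr{I}:=\im\textnormal{ev}_{V}$, and the quotients $(V\otimes\mathscr{O}_{X^{(r)}})/\mathscr{K}$ and $\eE/\mathscr{I}$ are all locally free. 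Consequently, taking fibers at $x_{0}$ preserves exactness in the sequence $0\to\mathscr{K}\to V\otimes\mathscr{O}_{X^{(r)}}\to\mathscr{I}\to 0$, and combining this with the inclusion $\mathscr{I}_{x_{0}}\hookrightarrow\eE_{x_{0}}$ yields an identification
\[
W:=\mathscr{K}_{x_{0}}/\mathfrak{m}_{x_{0}}\mathscr{K}_{x_{0}}=\ker(V\to\eE_{x_{0}}),
\]
where the latter map is evaluation at $x_{0}$ and $W$ is a $\widetilde{P}$-submodule of $V$. By $G$-homogeneity, it therefore suffices to show $W=0$.

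I would then argue by contradiction: suppose $W\neq 0$. Since $W$ is a nonzero finite-dimensional $G_{r}$-submodule of $V$, its $G_{r}$-socle $\textnormal{soc}_{G_{r}}W$ is nonzero and semisimple, hence contained in $\textnormal{soc}_{G_{r}}V$. By the hypothesis that $\textnormal{soc}_{G_{r}}V$ is simple, this forces $\textnormal{soc}_{G_{r}}V\subseteq W$, so every section in the socle vanishes at $x_{0}$. But $\textnormal{soc}_{G_{r}}V$ is a $G$-submodule of $V$ by \cite[II, Section 3.16]{Jantzen_book}, so by $G$-equivariance of $\textnormal{ev}_{V}$ and transitivity of the $G$-action on $X^{(r)}$, such a section would vanish on the entire $G$-orbit of $x_{0}$, hence everywhere. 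This forces $\textnormal{soc}_{G_{r}}V=0$, contradicting its simplicity, so $W=0$ and $\mathscr{K}=0$.

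The main obstacle I foresee is the first step: treating $X^{(r)}$ as the quotient $G/\widetilde{P}$ with $\widetilde{P}$ non-reduced and invoking the equivalence of $G$-equivariant coherent sheaves on $X^{(r)}$ with finite-dimensional $\widetilde{P}$-representations in this infinitesimal setting, so as to guarantee that both $\mathscr{K}$ and $\mathscr{I}$ are subbundles whose fibers at $x_{0}$ compute the expected kernel and image. Once this sheaf-theoretic setup is in place, the remainder of the argument is purely representation-theoretic and reduces to the elementary fact that a nonzero finite-dimensional submodule must intersect the socle nontrivially.
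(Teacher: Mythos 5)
Your proof is correct and follows essentially the same route as the paper's: both arguments reduce the statement to the fiber at a single point via $G$-homogeneity, observe that the kernel of the fiber map $V\to\eE_{x_{0}}$ is a $G_{r}$-submodule which, if nonzero, must contain the simple socle $\textnormal{soc}_{G_{r}}V$, and then propagate the vanishing over all of $X^{(r)}$ to contradict injectivity on global sections. The only difference is packaging: you justify local freeness of the kernel via the equivalence of $G$-equivariant sheaves on $G/\widetilde{P}$ with $\widetilde{P}$-modules, whereas the paper argues directly fiberwise using that $G_{r}$ acts trivially on $X^{(r)}$ and preserves fibers.
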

\begin{proof} 
Let $W=\textnormal{soc}_{G_{r}}V$. By assumption, this is a simple $G_{r}$-module, which is also a $G$-module in a natural way. We have a canonical $G$-equivariant injection 
\[
\epsilon:W\otimes\mathscr{O}_{X^{(r)}}\to V\otimes\mathscr{O}_{X^{(r)}}.
\]
From the definition of the $G$-action on $X^{(r)}$ it follows that $G_{r}$ acts trivially on $X^{(r)}$, so the action of $G_{r}$ on any $G$-equivariant vector bundle preserves its fibers. In particular, the action of $G_{r}$ preserves the fibers of $V\otimes  \mathscr{O}_{X}$ and $\eE$, and the restriction of $\textnormal{ev}_{V}$ to every fiber is a morphism of $G_{r}$-modules. If $\textnormal{ev}_{V}$ is not injective then its restriction to some fiber is not injective on the $G_{r}$-socle of $V$ (hence it is zero on this socle by the simplicity). It follows that the composition
\[
\textnormal{ev}_{W}=\textnormal{ev}_{V}\circ\epsilon:W\otimes\mathscr{O}_{X^{(r)}}\to\eE
\]
is zero on some fiber. However, since $G$ acts transitively on $X$ and since $\mathsf F^{(r)}:G\to G^{(r)}$ is a surjection, we see that $G$ acts transitively on $X^{(r)}$. Since $\textnormal{ev}_{W}$ is $G$-equivariant, it follows from the above discussion that it is zero on every fiber. On the other hand, if $W\neq 0$ then $\textnormal {ev}_{W}$ is injective on the global sections, and hence is not zero. A contradiction.
\end{proof}

\begin{proof}[Proof of Lemma \ref{Injectivity_Lemma}]
First, because of (\ref{Gr-socle}), we may apply Lemma \ref{Injectivity_lemma_2} to $\eE=\mathsf{F}_{*}^{^{(r)}}\mathscr{L}(\mu)$ and $V=H^{0}(X^{(r)},\mathsf{F}_{*}^{^{(r)}}\mathscr{L}(\mu))=H^{0}(\mu)$ to obtain injection $H^{0}(\mu)\otimes\mathscr{O}_{X^{(r)}}\to \mathsf{F}_{*}^{^{(r)}}\mathscr{L}(\mu)$. The Lemma follows if we further pushforward this injection by $\theta_{r*}$ and use (\ref{q-pushforward}) and (\ref{q-F-composition}).
\end{proof}

\section{The multiplicity of $\Ox$}\label{Section: Multiplicity of Ox}

In this section, we prove Theorem \ref{MultiplicityofOxThm}.
We follow the notation and the assumptions from the previous sections. In particular, $G$ is a semi-simple, simply connected, algebraic group over $K$, $B\subset G$ is a fixed Borel subgroup, $B\subset P$ is a parabolic subgroup, and $X=G/P$. In the notation of (\ref{Xr}), the assumption of Theorem \ref{MultiplicityofOxThm} may be written more compactly as $\mu\in X_{+}(T)\cap X(P)$.

\begin{lem}\label{Injectivity_Lemma_3}
Let $\mu\in X_{r}(T)\cap X(P)$. Then there exists a positive integer $m$ and an injective map of $\Ox$-modules $\iota:\mathsf F^{r}_{*}\mathscr{L}(\mu)\hookrightarrow\Ox^{\oplus m}$.
\end{lem}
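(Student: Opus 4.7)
The plan is to prove the lemma first in the case $P = B$ using the Andersen--Haboush theorem recalled in the introduction, and then deduce the general case by pushing the resulting embedding down along the canonical projection $\pi \colon G/B \to G/P$.

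For the case $P = B$: the hypothesis $\mu \in X_r(T)$ gives $\langle (p^r-1)\rho - \mu, \alpha^\vee \rangle = (p^r-1) - \langle \mu, \alpha^\vee \rangle \geq 0$ for every $\alpha \in S$, so $(p^r-1)\rho - \mu \in X_+(T)$. By (\ref{Non-zeroH0}) I pick a nonzero section $s \in H^0((p^r-1)\rho - \mu)$. Multiplication by $s$ is a nonzero, hence injective, morphism of line bundles $\mathscr L^B(\mu) \hookrightarrow \mathscr L^B((p^r-1)\rho)$. Since the Frobenius morphism is affine, $\mathsf F^r_*$ is exact and pushes this to an injection
\[
\mathsf F^r_* \mathscr L^B(\mu) \hookrightarrow \mathsf F^r_* \mathscr L^B((p^r-1)\rho) \simeq H^0((p^r-1)\rho) \otimes \Ox_{G/B} \simeq \Ox_{G/B}^{\oplus m},
\]
where the middle isomorphism is Andersen--Haboush and $m := \dim_K H^0((p^r-1)\rho)$.

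For general $P$: I would apply the left-exact functor $\pi_*$ to the injection from the previous paragraph. By naturality of absolute Frobenius we have $\pi \circ \mathsf F^r_{G/B} = \mathsf F^r_{G/P} \circ \pi$, hence $\pi_* \mathsf F^r_{G/B,*} = \mathsf F^r_{G/P,*} \pi_*$; and since $\mu \in X(P)$, we have $\pi^* \mathscr L^P(\mu) = \mathscr L^B(\mu)$, and the projection formula together with $\pi_* \Ox_{G/B} = \Ox_{G/P}$ gives $\pi_* \mathscr L^B(\mu) = \mathscr L^P(\mu)$. Putting these identifications together, applying $\pi_*$ to the injection above yields $\mathsf F^r_{G/P,*} \mathscr L^P(\mu) \hookrightarrow \Ox_{G/P}^{\oplus m}$, which is the claim.

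The bulk of the technical content is absorbed by the Andersen--Haboush theorem, which supplies the crucial free $\Ox_{G/B}$-module target on $G/B$; once this target is in hand, the remaining ingredients (multiplication by a section, exactness of $\mathsf F^r_*$, naturality of Frobenius with respect to $\pi$, and the projection formula) are standard, so I do not anticipate a substantive obstacle beyond correctly setting up these identifications.
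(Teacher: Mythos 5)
Your proposal is correct and follows essentially the same route as the paper's proof: embed $\mathscr{L}^{B}(\mu)$ into $\mathscr{L}^{B}((p^{r}-1)\rho)$ via a nonzero section of $\mathscr{L}^{B}((p^{r}-1)\rho-\mu)$, apply $\mathsf{F}^{r}_{*}$ and the Andersen--Haboush isomorphism, then push down along $\pi\colon G/B\to G/P$. The only cosmetic difference is that you take $m=\dim_{K}H^{0}((p^{r}-1)\rho)$ where the paper writes $m=p^{r\dim G/B}$; these agree.
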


\begin{proof}
We use several times the elementary fact that the pushforward $f_{*}$ is left-exact for any morphism $f$. Recall the isomomorphism of Andersen--Haboush \cite{Andersen_Frobenius}, \cite{Haboush_Kempf}
\begin{equation}\label{A-H_isomorphism}
\mathsf{F}^{r}_{*}\mathscr{L}^{B}((p^{r}-1)\rho)\simeq\mathscr{O}_{G/B}^{\oplus m}\qquad (m=p^{r \dim G/B}).
\end{equation}
If $\mu\in X_{r}(T)$, then $(p^{r}-1)\rho-\mu$ is a dominant weight. Hence,
\[
\textnormal{Hom}_{\mathscr{O}_{G/B}}(\mathscr{L}^{B}(\mu),\mathscr{L}^{B}((p^{r}-1)\rho))=H^{0}((p^{r}-1)\rho-\mu)\neq0
\]
by (\ref{Non-zeroH0}). On $G/B$, every non-zero homomorphism from a line bundle is injective, so there exists an injective morphism
\[
\iota_{1}:\mathscr{L}^{B}(\mu)\hookrightarrow \mathscr{L}^{B}((p^{r}-1)\rho),
\]
and it follows from (\ref{A-H_isomorphism}) that we have an injective morphism
\[
\iota_{2}=\mathsf{F}_{*}^{r}\iota_{1}:\mathsf{F}^{r}_{*}\mathscr{L}^{B}(\mu)\hookrightarrow\mathscr{O}_{G/B}^{\oplus m}.
\]
Finally, we have a projection $\pi:G/B\to X$ such that $\pi_{*}\mathscr{L}^{B}(\mu)=\mathscr{L}^{P}(\mu)$ for all $\mu\in X(P)$. Since $\mathsf{F}^{r}\circ\pi=\pi\circ\mathsf{F}^{r}$ we obtain the desired injection
\[
\iota=\pi_{*}\iota_{2}:
\mathsf{F}^{r}_{*}\mathscr{L}^{P}(\mu)=
\pi_{*}\mathsf{F}^{r}_{*}\mathscr{L}^{B}(\mu)
\hookrightarrow
\pi_{*}\mathscr{O}_{G/B}^{\oplus m}=\Ox^{\oplus m}. \qedhere
\]
\end{proof}

We are now ready to give a proof of Theorem \ref{MultiplicityofOxThm}.

\begin{proof}[Proof of Theorem \ref{MultiplicityofOxThm}]
It follows from Lemma \ref{Injectivity_Lemma} that we have an injection
\[
\textnormal{ev}:H^{0}(\mu)\otimes\Ox\hookrightarrow\mathsf{F}^{r}_{*}\mathscr{L}(\mu),
\]
and from Lemma \ref{Injectivity_Lemma_3} we have an injection
\[
\iota:\mathsf F^{r}_{*}\mathscr{L}(\mu)\hookrightarrow\Ox^{\oplus m}.
\]
The composition $\iota\circ\textnormal{ev}$ is an injective morphism of globally free $\Ox$-modules. On a projective variety, every such morphism splits. Since $\iota\circ\textnormal{ev}$ splits, so does $\textnormal{ev}$.
\end{proof}

\begin{ex}
Any $\mu\in X_{+}(T)$ can be written uniquely as $\mu=\mu_{0}+p^{r}\mu_{1}$, with $\mu_{0}\in X_{r}(T)$ and $\mu_{1}\in X_{+}(T)$, and if, furthermore, $\mu\in X(P)$ then we also have $\mu_{0},\mu_{1}\in X(P)$. It follows from Theorem \ref{MultiplicityofOxThm} and the projection formula applied to $\mathsf{F}^{r}$ that $H^{0}(\mu_{0})\otimes\mathscr{L}(\mu_{1})$ is a direct summand of $\mathsf{F}^{r}_{*}\mathscr{L}(\mu)=\mathscr{L}(\mu_{1})\otimes\mathsf{F}_{*}^{r}\mathscr{L}(\mu_{0})$. If $\mu\in X_{r}(T)$, then we can determine another direct summand of $\mathsf{F}^{r}_{*}\mathscr{L}(\mu)$ by applying the above observation to
$\left(\mathsf{F}^{r}_{*}\mathscr{L}(\mu)\right)^{\vee}=\mathsf{F}^{r}_{*}\mathscr{L}(2(p^{r}-1)\rho-\mu)$
and dualizing.
\end{ex}

\begin{ex}\label{Ox-example_2}
Here is a special case of the previous example. Let $X=G/B$ be a full flag variety. It follows from Example \ref{Ox-example_1} that $\mathscr{L}^{B}(-\rho)$ is a direct summand of $\mathsf{F}^{r}_{*}\Ox$. We now compute its multiplicity. By the projection formula, this is the same as the multiplicity of $\Ox$ as as summand of
$
(\mathsf{F}_{*}^{r}\Ox)^{\vee}\otimes\mathscr{L}^{B}(-\rho)=\mathsf{F}_{*}^{r}\mathscr{L}^{B}((p^{r}-2)\rho).
$
By Theorem \ref{MultiplicityofOxThm} this multiplicity is $\dim_{K}H^{0}((p^{r}-2)\rho)$. In fact, the discussion from this and preceding sections shows that the evaluation map $H^{0}((p^{r}-2)\rho)\otimes\Ox\to\mathsf{F}^{r}_{*}\mathscr{L}^{B}((p^{r}-2)\rho)$ induces, after dualizing and twisting by $\mathscr{L}^{B}(-\rho)$, a natural surjection
\begin{equation}\label{surjection_1}
\mathsf{F}^{r}_{*}\Ox\to H^{0}((p^{r}-2)\rho)^{\vee}\otimes\mathscr{L}^{B}(-\rho)
\end{equation}
that is split in the category of $\Ox$-modules. For $r=1$ we have $L((p-2)\rho)^{\vee}=L((p-2)\rho)$ by \cite[II, Section 2.5, Proposition]{Jantzen_book}, so composing the natural surjection $H^{0}((p-2)\rho)^{\vee}\otimes\cO_{X}\to L((p-2)\rho)\otimes\cO_{X}$ with (\ref{surjection_1}) gives a natural surjection
\begin{equation}\label{surjection_2}
\mathsf{F}^{r}_{*}\Ox\to L((p-2)\rho)\otimes\mathscr{L}^{B}(-\rho).
\end{equation}
At the end of \cite{Gros-Kaneda}, Gros--Kaneda ask if (\ref{surjection_2}). This is indeed the case because already (\ref{surjection_1}) is split.
\end{ex}

\newpage

\bibliographystyle{plain}
\bibliography{Bibliography}

\end{document}